\numberwithin{equation}{section}
\numberwithin{figure}{section}
\theoremstyle{plain}
\newtheorem{thm}{\protect\theoremname}[section]
\theoremstyle{definition}
\newtheorem{defn}[thm]{\protect\definitionname}
\theoremstyle{plain}
\newtheorem{lem}[thm]{\protect\lemmaname}
\providecommand{\definitionname}{Definition}
\providecommand{\lemmaname}{Lemma}
\providecommand{\theoremname}{Theorem}
\begin{document}

\title{Abundance of Matrices In Gaussian Integers}

\author{Aninda Chakraborty}
\begin{abstract}
In \cite{key-43}, N. Hindman, I. Leader and D. Strauss proved the
abundance for a matrix with rational entries. In this paper we proved
it for the ring of Gaussian integers. We showed the result when the
matrix is taken with entries from $\mathbb{Q}\left[i\right]$. The
main obstacle is in the field of complex numbers, no linear order
relation exists. We overcome that in a tactful way.
\end{abstract}

\maketitle

\section{Introduction}

Some problems in Ramsey Theory can be seen as follows:

Let $M$ be any $r\times s$ matrix, where $r,s\in\mathbb{N}$and
all entries of $M$ are from $\mathbb{N}$. Then, could it be possible
for each and every partition (finite) of $\mathbb{N}$ , one can find
some $\vec{u}\in\mathbb{N}^{s}$ so that all entries from $M\vec{u}$
are in one cell from the partition?

At first, take van der Waerden\textquoteright s Theorem. Then the
elements from the arithmetic progression $\{a,a+d,\ldots,a+ld\}$
is nothing but entries from

\[
\left(\begin{array}{cc}
1 & 0\\
1 & 1\\
\vdots & \vdots\\
1 & l
\end{array}\right)\cdot\left(\begin{array}{c}
a\\
d
\end{array}\right).
\]

So, we have this typical definition:
\begin{defn}
Let $\left(T,+\right)$ be a commutative monoid. Suppose $r,s\in\mathbb{N}$
and $M$ be any $r\times s$ matrix which has entries in $\mathbb{N}\cup\left\{ 0\right\} $.
Then $M$ is called image partition regular on $T$ (symbolised as
$IPR/T$) if and only if for each $k\in\mathbb{N}$ and $T=\bigcup_{j=1}^{k}D_{i}$,
then there must exist $j\in\left\{ 1,2,\ldots,k\right\} $ and $\vec{u}\in\left(T\setminus\left\{ 0\right\} \right)^{s}$
so that $M\vec{u}\in D_{j}^{r}$. 
\end{defn}

Let $\left(T,+\right)$ is a commutative monoid, $t\in T$ be any
element and $m\in\mathbb{N}$. Then we denote the $m$ times sum of
$t$ with itself by $mt$. 

In 1993, N. Hindman and I. Leader has introduced image partition regularity
of matrices in \cite{key-42}. Next in \cite{key-43} authors provided
some new characterizations and consequences of image partition regularity.
One of there characterizations is that the image partition regular
matrices are precisely those that preserve a certain notion of largeness
(\textquotedblleft central sets\textquotedblright ). In \cite{key-24},
authors completely accoplised the determination of image partition
regular matrices over $\mathbb{Z}\left[i\right]$.

Furstenberg and Glasner have shown in \cite{key-29} that for a particular
notion of largeness in a group, namely piecewise syndeticity, if a
set $B$ is a large subset $\mathbb{Z}$, then for any $l\in\mathbb{N}$,
the set of length $l$ arithmetic progressions lying entirely in $B$
is large among the set of all length $l$-aritmetic progressions.
In \cite{key-8}, authors extend this result to apply to infinitely
many notions of largeness in arbitrary semigroups and to other partition
regular structures besides arithmetic progressions. In \cite{key-43},
authors showed the abundance of image parition regular matrices .Also,
they examined for other well known notions of largeness.( Like IP$^{*}$,$\Delta^{*}$,PS$^{*}$,
piecewise syndetic, thick etc.). To obtain these results, usual order
relation of $\mathbb{N}$ played an important role. We have extend
these results of abunndance of matrices on $\mathbb{Z}\left[i\right]$,
the ring of Gaussian integers. In $\mathbb{Z}\left[i\right]$, there
is no order relation. So, we need to derive this results using different
technique.

First we need to prove this lemma before proceed to the main results:
\begin{lem}
Let $A$ be a $u\times v$ image partition regular matrix over $\mathbb{Q}\left[i\right]$.
The following are equivalent: 

(a) There exists $\vec{s}\in\left(\mathbb{Q}\left[i\right]\right)^{v}$
such that $A\vec{s}=\vec{1}$. 

(b) There exists $l\in\mathbb{Z}\left[i\right]\setminus\left\{ 0\right\} $
such that, if $p$ is in the smallest ideal of $\beta\mathbb{Z}\left[i\right]$
and $l\mathbb{Z}\left[i\right]\in p$, then, for every $P\in p$,
there exists $\vec{z}\in\left(\mathbb{Z}\left[i\right]\right)^{v}$such
that $A\vec{z}\in P^{u}$.
\end{lem}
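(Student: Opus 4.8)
The plan is to treat $\mathbb{Q}[i]$ as the field of fractions of the Euclidean domain $\mathbb{Z}[i]$, so that the linear algebra of $A$ may be done over the field $\mathbb{Q}[i]$ while all notions of largeness are read off in the discrete additive semigroup $(\mathbb{Z}[i],+)$ through its \v{C}ech--Stone compactification $\beta\mathbb{Z}[i]$ and smallest ideal $K(\beta\mathbb{Z}[i])$. I will freely use the already-quoted characterization that an image partition regular matrix preserves central sets: if $C\subseteq\mathbb{Z}[i]$ is central, then there is $\vec{x}\in(\mathbb{Z}[i])^{v}$ with $A\vec{x}\in C^{u}$. The two implications are proved separately; $(a)\Rightarrow(b)$ is a direct ultrafilter argument, while $(b)\Rightarrow(a)$, proved by contraposition, is where the lack of an order on $\mathbb{Z}[i]$ has to be dealt with.

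For $(a)\Rightarrow(b)$ I would first clear denominators. Given $\vec{s}$ with $A\vec{s}=\vec{1}$, choose $l\in\mathbb{Z}[i]\setminus\{0\}$ so that $\vec{w}:=l\vec{s}\in(\mathbb{Z}[i])^{v}$; then $A\vec{w}=l\vec{1}$, and this $l$ is the one asserted in $(b)$. Now fix any $p\in K(\beta\mathbb{Z}[i])$ with $l\mathbb{Z}[i]\in p$ and any $P\in p$. Choose a minimal idempotent $e$ in the minimal left ideal containing $p$, so that $p+e=p$; then $B:=\{x:-x+P\in e\}\in p$. Since $l\mathbb{Z}[i]\in p$ as well, $B\cap l\mathbb{Z}[i]\neq\emptyset$, so pick $b=lm\in B\cap l\mathbb{Z}[i]$. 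By construction $-b+P\in e$ is central, so the central-set characterization gives $\vec{x}\in(\mathbb{Z}[i])^{v}$ with $A\vec{x}\in(-b+P)^{u}$, that is $b\vec{1}+A\vec{x}\in P^{u}$. The decisive point is that the translate can now be absorbed: $b\vec{1}=m\,(l\vec{1})=m\,A\vec{w}=A(m\vec{w})$, so $\vec{z}:=\vec{x}+m\vec{w}\in(\mathbb{Z}[i])^{v}$ satisfies $A\vec{z}\in P^{u}$. Thus hypothesis $(a)$ is exactly what lets an arbitrary shift be realised inside the image of $A$.

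For $(b)\Rightarrow(a)$ I argue contrapositively. If no $\vec{s}\in(\mathbb{Q}[i])^{v}$ solves $A\vec{s}=\vec{1}$, then $\vec{1}$ lies outside the column space of $A$, so over the field $\mathbb{Q}[i]$ there is a row vector $\vec{c}$ with $\vec{c}^{\,T}A=\vec{0}$ and $\vec{c}^{\,T}\vec{1}\neq0$. Clearing denominators, I may take $\vec{c}\in(\mathbb{Z}[i])^{u}$ and put $d:=\sum_{k}c_{k}\in\mathbb{Z}[i]\setminus\{0\}$. Every image $\vec{y}=A\vec{z}$ then satisfies the single linear constraint $\sum_{k}c_{k}y_{k}=\vec{c}^{\,T}A\vec{z}=0$. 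So, for an arbitrary $l$, it suffices to produce $p\in K(\beta\mathbb{Z}[i])$ with $l\mathbb{Z}[i]\in p$ together with a set $P\in p$ containing no $u$-tuple $(y_{1},\dots,y_{u})$ with $\sum_{k}c_{k}y_{k}=0$; such a $P$ admits no $\vec{z}$ with $A\vec{z}\in P^{u}$ and so contradicts $(b)$.

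Here is where I would sidestep the absence of order by a congruence, which is the crux of the proof. Since $d\neq0$ has only finitely many prime divisors in the principal ideal domain $\mathbb{Z}[i]$, pick a prime $q$ with $q\nmid d$ and set $P:=l\,(1+q\mathbb{Z}[i])=l+lq\mathbb{Z}[i]$. This is a single coset of the finite-index subgroup $lq\mathbb{Z}[i]$, hence syndetic, hence a member of some $p\in K(\beta\mathbb{Z}[i])$; and as $P\subseteq l\mathbb{Z}[i]$ we automatically get $l\mathbb{Z}[i]\in p$. If $y_{k}=l(1+qt_{k})\in P$ for each $k$, then $\sum_{k}c_{k}y_{k}=l\bigl(d+q\sum_{k}c_{k}t_{k}\bigr)$, which is $\equiv ld\pmod{lq}$ and hence nonzero because $q\nmid d$. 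Thus $P$ is free of solutions of the relation, no image of $A$ meets $P^{u}$, and $(b)$ fails. The whole difficulty that the order on $\mathbb{N}$ resolved over the rationals is absorbed, in the Gaussian setting, into the reduction of the linear form modulo a well-chosen prime; once that observation is in place the rest is bookkeeping.
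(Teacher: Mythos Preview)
Your proof is correct and follows essentially the same route as the paper: for $(a)\Rightarrow(b)$ both clear denominators to get $A\vec{w}=l\vec{1}$, write $p=p+e$ for a minimal idempotent $e$, pick $lm$ in $\{x:-x+P\in e\}\cap l\mathbb{Z}[i]$, solve $A\vec{x}\in(-lm+P)^{u}$ by image partition regularity, and absorb the shift via $\vec{z}=\vec{x}+m\vec{w}$; for $(b)\Rightarrow(a)$ both take a left null vector $\vec{c}$ of $A$ with $\vec{c}\cdot\vec{1}=d\neq0$ and defeat any proposed $l$ with a congruence class modulo a Gaussian prime not dividing the relevant product. The only cosmetic differences are that the paper builds $p$ explicitly as $l+q$ for a minimal idempotent $q$ and takes $P=l+r\mathbb{Z}[i]$ with $|r|>|ld|$, whereas you take $P=l+lq\mathbb{Z}[i]$ with $q\nmid d$ and invoke syndeticity to land in some $p\in K(\beta\mathbb{Z}[i])$; both choices work for the same reason.
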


\begin{proof}
$\left(a\right)\Rightarrow\left(b\right)$. We can choose $\vec{w}\in\left(\mathbb{Z}\left[i\right]\right)^{v}$
and $l\in\mathbb{Z}\left[i\right]\setminus\left\{ 0\right\} $ such
that $A\vec{w}=\vec{l}$, where $\vec{l}=\left(l,l,\ldots,l\right)^{T}\in\left(\mathbb{Z}\left[i\right]\setminus\left\{ 0\right\} \right)^{u}$.
Suppose that $p$ is in the smallest ideal of $\beta\mathbb{Z}\left[i\right]$
and that $l\mathbb{Z}\left[i\right]\in p$. Let $P\in p$. There exists
a minimal idempotent $q\in\beta\mathbb{Z}\left[i\right]$ such that
$p=p+q$ \cite[Theorem 2.8 and Lemma 1.30]{key-53}. Let $P'=\left\{ b\in P:-b+P\in q\right\} $.
Then $P'\in p$. Since $l\mathbb{Z}\left[i\right]\in p$ we can choose
$t\in\mathbb{Z}\left[i\right]$ such that $lt\in\mathbb{Z}\left[i\right]$.
Let $Q=-lt+P\in q$. By \cite[Theorem 3.1(h)]{key-24}, there exists
$\vec{x}\in\left(\mathbb{Z}\left[i\right]\setminus\left\{ 0\right\} \right)^{v}$
such that $A\vec{x}\in Q^{v}$ and for every $i\in\left\{ 1,2,\ldots,v\right\} $.
Then $t\vec{w}+\vec{x}\in\left(\mathbb{Z}\left[i\right]\setminus\left\{ 0\right\} \right)^{v}$
and $A\left(t\vec{w}+\vec{x}\right)=lt+A\vec{x}\in P^{u}$.

$\left(\text{b}\right)\Rightarrow\left(\text{a}\right)$. We may suppose
that the entries of $A$ are in $\mathbb{Z}\left[i\right]$ as we
could replace $A$ by $nA$ for a suitable $n\in\mathbb{N}$. 

Suppose that $\vec{1}\notin\left\{ A\vec{x}:\vec{x}\in\left(\mathbb{Q}\left[i\right]\right)^{v}\right\} $.
Then there exists $\vec{u}\in\left(\mathbb{Z}\left[i\right]\right)^{v}$
such that $\vec{u}\cdot A\vec{x}=0$ for every $\vec{x}\in\left(\mathbb{Q}\left[i\right]\right)^{v}$,
but $\vec{u}.\vec{1}\neq0$.

Choose a Gaussian prime number $r\in\mathbb{Z}\left[i\right]\setminus\left\{ 0\right\} $
satisfying $\left|r\right|>\left|l\right|$ and $\left|r\right|>\left|l\vec{u}\cdot\vec{1}\right|$.
Let $q$ be a minimal idempotent in $\beta\mathbb{Z}\left[i\right]$
and let $p=l+q$. Then $P=\left\{ t\in\mathbb{Z}\left[i\right]\setminus\left\{ 0\right\} :t\equiv l\left(\mod r\right)\right\} \in p$($t\equiv l\left(\mod r\right)\Rightarrow$$t-l$
is divisible by $r$ in $\mathbb{Z}\left[i\right]$) (by \cite[Lemma 2.1]{key-24}).
It follows from $\left(\text{b}\right)$ that there exists $\vec{z}\in\left(\mathbb{Z}\left[i\right]\right)^{v}$
such that $A\vec{z}\in P^{u}$ and hence that $\left(A\vec{z}\right)_{i}\equiv l\left(\mod r\right)$
for $i\in\left\{ 1,2,\ldots,u\right\} $. Thus, $A\vec{z}\equiv l\vec{1}\left(\mod\vec{r}\right)$,
a contradiction, as $\vec{u}\cdot A\vec{z}\equiv0\left(\mod r\right)$,
but $\vec{u}\cdot l\vec{1}\neq0\left(\mod r\right)$.
\end{proof}

\section{Large sets in $\mathbb{Z}\left[i\right]$}

There are several notions of largeness that make sense in any semigroup.
The notion of \textquotedblleft central\textquotedblright{} sets is
one of these. Among the others are the notions of \textquotedblleft syndetic\textquotedblright ,
\textquotedblleft piecewise syndetic\textquotedblright , \textquotedblleft IP\textquotedblright ,
and \textquotedblleft $\Delta$\textquotedblright{} sets.
\begin{defn}
Let $\left(S,+\right)$ be a commutative semigroup and let $B\subseteq S$. 

(a) The set $B$ is syndetic if and only if there exists some $G\in\mathcal{P}_{f}\left(S\right)=\left\{ H\subseteq S:H\text{ is finite and nonempty}\right\} $
such that $S=\cup_{t\in G}-t+B$. 

(b) The set $B$ is piecewise syndetic if and only if there exists
some $G\in\mathcal{P}_{f}\left(S\right)$ such that for every $F\in\mathcal{P}_{f}\left(S\right)$
there exists $x\in S$ such that $F+x\subseteq\cup_{t\in G}-t+B$.

(c) The set $B$ is an IP set if and only if there exists a sequence
$\left\langle x_{n}\right\rangle _{n=1}^{\infty}$ in $S$ such that
$FS\left(\left\langle x_{n}\right\rangle _{n=1}^{\infty}\right)\subseteq B$,
where $FS\left(\left\langle x_{n}\right\rangle _{n=1}^{\infty}\right)=\left\{ \sum_{n\in F}x_{n}:F\in\mathcal{P}_{f}\left(\mathbb{N}\right)\right\} $
and the sums are taken in increasing order of indices.

(d) The set $B$ is a $\Delta$ set if and only if there exists a
sequence $\left\langle x_{n}\right\rangle _{n=1}^{\infty}$ in $S$
such that for every $n,m\in\mathbb{N}$ with $n<m$, $x_{m}\in\left(x_{n}+B\right)$.
\end{defn}

Notice that, if $S$ can be embedded in a group $G$, $B$ is a $\Delta$
set if and only if there is a sequence $\left\langle x_{n}\right\rangle _{n=1}^{\infty}$
such that $\left\{ -x_{n}+x_{m}:m,n\in\mathbb{N}\text{ and }n<m\right\} \subseteq B$.
Notice also that any IP set is a $\Delta$ set. (Given $\left\langle x_{n}\right\rangle _{n=1}^{\infty}$
with $FS\left(\left\langle x_{n}\right\rangle _{n=1}^{\infty}\right)\subseteq B$
and given $n\in\mathbb{N}$, let $y_{n}=\sum_{t=1}^{n}x_{n}$). Given
any property $\mathcal{E}$ of subsets of a set $X$, there is a dual
property $\mathcal{E^{*}}$ defined by specifying that a subset $B$
of $X$ is an $\mathcal{E^{*}}$ set if and only if $B\cap A\neq\emptyset$
for every $\mathcal{E}$ set $A$.
\begin{defn}
Let $\left(S,+\right)$ be a commutative semigroup and let $B\subseteq S$.
Then B is a central$^{*}$ set if and only $B\cap A\neq\emptyset$
if for every central set $A$ in $S$. Also, $B$ is a PS$^{*}$ set
if and only if $B\cap A\neq\emptyset$ for every piecewise syndetic
set $A$ in $S$, $B$ is an IP$^{*}$ set if and only if $B\cap A\neq\emptyset$
for every IP set $A$ in $S$, $B$ is a syndetic$^{*}$ set if and
only if $B\cap A\neq\emptyset$ for every syndetic set $A$ in $S$,
and $B$ is a $\Delta^{*}$ set if and only if $B\cap A\neq\emptyset$
for every set $A$ in $S$.
\end{defn}

The concept of \textquotedblleft syndetic$^{*}$\textquotedblright{}
is more commonly referred to as \textquotedblleft thick\textquotedblright ,
and we shall follow this practice. 

The sets $\Delta$ and $\Delta^{*}$ sets are interesting because
they arise as sets of recurrence, which in turn have significant combinatorial
properties. (See \cite{key-28}.) The other notions discussed above
have simple, and useful, algebraic characterizations in terms of $\beta S$.
\begin{lem}
\label{Lemma 1.3}Let $\left(S,+\right)$ be a commutative semigroup
and let $B\subseteq S$. 

(a) $A$ is piecewise syndetic if and only if $\bar{A}\cap K\left(\beta S\right)\neq\emptyset$.

(b) $A$ is IP if and only if there is some idempotent of $\beta S$
in $\bar{A}$.

(c) $A$ is syndetic if and only if for every left ideal $L$ of $\beta S$,
$\bar{A}\cap L\neq\emptyset$.

(d) $A$ is central if and only if there is some minimal idempotent
of $\beta S$ in $\bar{A}$.

(e) $A$ is central$^{*}$ if and only if every minimal idempotent
of $\beta S$ is in $\bar{A}$.

(f ) $A$ is thick if and only if $A$ contains a left ideal of $\beta S$.

(g) $A$ is IP$^{*}$ if and only if every idempotent of $\beta S$
is in $\bar{A}$.

(h) $A$ is PS$^{*}$ if and only if $K\left(\beta S\right)\subseteq\bar{A}$.
\end{lem}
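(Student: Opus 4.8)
These eight parts are the standard algebraic renderings of the various notions of largeness in the Stone--\v{C}ech compactification, so the plan is to reduce all of them to the structure theory of $\beta S$ together with a single duality argument, organising the work into three blocks: the ``positive'' characterizations (a)--(d), the dual ``starred'' ones (e), (g), (h), and the thick case (f). First I would fix the ambient facts. Realising $\beta S$ as the space of ultrafilters on $S$ and writing $\bar{A}=\{p\in\beta S:A\in p\}$, the family $\{\bar{A}:A\subseteq S\}$ is a clopen basis satisfying $\overline{A\cap B}=\bar{A}\cap\bar{B}$ and $\overline{S\setminus A}=\beta S\setminus\bar{A}$; with $-x+A=\{y\in S:x+y\in A\}$ and $A\in p+q$ precisely when $\{x:-x+A\in q\}\in p$, the operation extends $+$ and makes $\beta S$ a compact right topological semigroup. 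By Ellis's theorem it then contains idempotents, it has a smallest two-sided ideal $K(\beta S)$ that is simultaneously the union of all minimal left ideals and of all minimal right ideals, and its minimal idempotents are exactly the idempotents lying in $K(\beta S)$. These are the only inputs I would need.

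For the positive block, part (d) holds by the standard identification of central sets with members of minimal idempotents, so it needs no separate argument. Part (b) I would prove from the idempotent lemma: for an idempotent $p$ and $A\in p$, the set $A^{\star}=\{x\in A:-x+A\in p\}$ again belongs to $p$ and satisfies $-x+A^{\star}\in p$ for every $x\in A^{\star}$. On one side this feeds the Galvin--Glazer induction, producing a sequence all of whose finite sums lie in $A$; on the other side, from a given sequence with $FS$ inside $A$ one forms the set of ultrafilters containing every tail, which is a nonempty closed subsemigroup of $\beta S$ and hence contains an idempotent, necessarily in $\bar{A}$. Parts (a) and (c) I would read off the description of $K(\beta S)$: unwinding the combinatorial definitions through the operation shows that $A$ is syndetic exactly when $\bar{A}$ meets every (minimal) left ideal, and that $A$ is piecewise syndetic exactly when $\bar{A}$ meets some minimal left ideal, i.e. when $\bar{A}\cap K(\beta S)\neq\emptyset$.

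The starred block then follows from one reusable observation: if a property $\mathcal{E}$ is characterised by ``some point of a class $C\subseteq\beta S$ lies in $\bar{A}$'', then its dual $\mathcal{E}^{*}$ is characterised by ``$C\subseteq\bar{A}$''. Indeed, if $C\subseteq\bar{A}$ and $B$ is an $\mathcal{E}$ set, pick $p\in C$ with $B\in p$; then $A\cap B\in p$, so $A$ meets $B$. Conversely, if some $p\in C$ has $A\notin p$, then $S\setminus A\in p$, so $S\setminus A$ is an $\mathcal{E}$ set disjoint from $A$, contradicting $\mathcal{E}^{*}$. Taking $C$ to be the idempotents, the minimal idempotents, and $K(\beta S)$ yields (g), (e), (h) from (b), (d), (a) respectively. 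Finally (f) I would treat directly: $A$ is thick iff $\{-x+A:x\in S\}$ has the finite intersection property, iff there is $p$ with $-x+A\in p$ for every $x\in S$, which by the formula for the operation says precisely that $\beta S+p\subseteq\bar{A}$, that is, $\bar{A}$ contains a left ideal.

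The main obstacle is the positive block, and within it the two forward directions: the Galvin--Glazer construction underlying (b), and the combinatorial identification in (a) of piecewise syndeticity with meeting $K(\beta S)$, where one must recover the smallest ideal from the definition involving a fixed finite $G$ and the translates $F+x$. The one point demanding genuine care is keeping the left/right conventions consistent, so that the duality observation and the ideal computations all use the same definition of $p+q$; once that is fixed, the remainder is bookkeeping in the clopen algebra of $\beta S$.
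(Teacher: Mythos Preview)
Your proposal is correct, and for the starred parts (e), (g), (h) you use exactly the duality the paper invokes, while (d) is, as you say and as the paper says, a definition. The only real divergence is in how (a)--(c) and (f) are handled. The paper does not argue these at all: it simply cites \cite[Theorem 4.40]{key-53} for (a), \cite[Theorem 5.12]{key-53} for (b), and \cite[Theorem 2.9(d)]{key-13} for (c), and then says (f) follows from (c) (via the complement: $A$ is thick iff $S\setminus A$ is not syndetic, iff some left ideal misses $\overline{S\setminus A}$, iff some left ideal lies in $\bar A$). You instead sketch the Galvin--Glazer argument for (b), outline the ideal-theoretic reading for (a) and (c), and give a direct finite-intersection-property argument for (f). Your route is more self-contained and makes the logical dependencies visible; the paper's route is shorter but pushes all the content into the cited references. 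Both are sound, and your care about left/right conventions is well placed, since that is indeed where such arguments can silently go wrong.
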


\begin{proof}
Statement (a) is \cite[Theorem 4.40]{key-53}, (b) is \cite[Theorem 5.12]{key-53},
(c) is \cite[Theorem 2.9(d)]{key-13}, and (d) is the definition of
central. Statements (e), (f), (g), and (h) follow easily from statements
(d), (c), (b), and (a) respectively. 
\end{proof}
Now we will prove a lemma we need.
\begin{lem}
\label{l=00003D1}Let $A$ be a $u\times$v image partition regular
matrix over $\mathbb{Q}\left[i\right]$. The following are equivalent: 

(a) There exists $\vec{s}\in\mathbb{Q}\left[i\right]^{v}$ such that
$A\vec{s}=\vec{1}$.

(b) There exists $l\in\mathbb{Z}\left[i\right]\setminus\left\{ 0\right\} $
such that, if $p$ is in the smallest ideal of $\beta\mathbb{Z}\left[i\right]$
and $l\mathbb{Z}\left[i\right]\in p$, then, for every $P\in p$,
there exists $\vec{z}\in\left(\mathbb{Z}\left[i\right]\setminus\left\{ 0\right\} \right)$
such that $A\vec{z}\in P^{u}$.
\end{lem}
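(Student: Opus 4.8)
The plan is to piggy-back on the lemma already proved in the Introduction, since the present statement is word-for-word the same except that here condition $(b)$ demands the witness $\vec{z}$ to lie in $\left(\mathbb{Z}\left[i\right]\setminus\{0\}\right)^{v}$ rather than merely in $\left(\mathbb{Z}\left[i\right]\right)^{v}$. Writing $(b_{w})$ for the weaker condition of the earlier lemma and $(b_{s})$ for the present one, I would establish $(a)\Leftrightarrow(b_{s})$ by routing through $(a)\Leftrightarrow(b_{w})$, which I am allowed to assume.

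For $(a)\Rightarrow(b_{s})$ I would simply rerun the construction from the earlier proof: clear denominators to get $\vec{w}\in\left(\mathbb{Z}\left[i\right]\right)^{v}$ and $l\in\mathbb{Z}\left[i\right]\setminus\{0\}$ with $A\vec{w}=\vec{l}$; given $p$ in the smallest ideal with $l\mathbb{Z}\left[i\right]\in p$ and $P\in p$, write $p=p+q$ for a minimal idempotent $q$, pass to $P'=\{b\in P:-b+P\in q\}\in p$, choose $t$ with $lt\in\mathbb{Z}\left[i\right]$, set $Q=-lt+P\in q$, and invoke the characterization \cite[Theorem 3.1(h)]{key-24} to obtain $\vec{x}\in\left(\mathbb{Z}\left[i\right]\setminus\{0\}\right)^{v}$ with $A\vec{x}\in Q^{u}$; then $\vec{z}=t\vec{w}+\vec{x}$ satisfies $A\vec{z}=lt+A\vec{x}\in P^{u}$. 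The one place where this direction carries content beyond the earlier lemma is the requirement that every coordinate of $\vec{z}=t\vec{w}+\vec{x}$ be nonzero, and this is exactly what I expect to be the main obstacle. I would resolve it using the slack in \cite[Theorem 3.1(h)]{key-24}: the characterization delivers $\vec{x}$ whose coordinates may be chosen from a large (translation-stable) set, so for each $i$ I can arrange $x_{i}\neq -t\,w_{i}$ and hence $z_{i}\neq 0$; this is the step I would verify carefully rather than assert.

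For $(b_{s})\Rightarrow(a)$ the argument is immediate: a vector $\vec{z}\in\left(\mathbb{Z}\left[i\right]\setminus\{0\}\right)^{v}$ with $A\vec{z}\in P^{u}$ is a fortiori a vector in $\left(\mathbb{Z}\left[i\right]\right)^{v}$ with the same property, so $(b_{s})$ implies $(b_{w})$, and the earlier lemma then gives $(a)$. Should a self-contained proof be preferred, I would instead reproduce the contradiction argument: assuming $\vec{1}\notin\{A\vec{x}:\vec{x}\in\left(\mathbb{Q}\left[i\right]\right)^{v}\}$, take $\vec{u}\in\left(\mathbb{Z}\left[i\right]\right)^{v}$ annihilating the column space with $\vec{u}\cdot\vec{1}\neq0$, select a Gaussian prime $r$ with $|r|>|l|$ and $|r|>|l\,\vec{u}\cdot\vec{1}|$, form $p=l+q$ so that $P=\{t:t\equiv l\pmod r\}\in p$, and apply $(b_{s})$ to produce $\vec{z}$ with $A\vec{z}\equiv l\vec{1}\pmod{\vec{r}}$, whence $\vec{u}\cdot l\vec{1}\equiv0\pmod r$ contradicts the norm bound on $r$. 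In either form the decisive substitute for the missing linear order on $\mathbb{C}$ is the absolute value on $\mathbb{Z}\left[i\right]$ together with congruence modulo a Gaussian prime of sufficiently large norm, and that is where I would concentrate the verification.
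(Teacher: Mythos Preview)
Your proposal is correct and follows essentially the same route as the paper: the paper likewise reruns the $(a)\Rightarrow(b)$ construction and isolates exactly the issue you flag---ensuring the shifted vector has no zero coordinate---resolving it by invoking \cite[Lemma~2.3]{key-24} to choose the entries of $\vec{x}$ with arbitrarily large modulus (so that $|x_i|>|t\,w_i|$ forces $x_i+t\,w_i\neq 0$). For $(b)\Rightarrow(a)$ the paper gives precisely your self-contained contradiction argument with a prime of sufficiently large norm; one small slip to fix in your write-up is that the chosen $t$ must satisfy $lt\in P'$, not merely $lt\in\mathbb{Z}[i]$.
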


\begin{proof}
(a) $\Rightarrow$ (b). We can choose $\vec{u}\in\left(\mathbb{Z}\left[i\right]\setminus\left\{ 0\right\} \right)^{v}$
and $l\in\mathbb{Z}\left[i\right]\setminus\left\{ 0\right\} $ such
that $A\vec{z}=\vec{l}$, where $\vec{l}=\left(ll\ldots l\right)^{T}\in\left(\mathbb{Z}\left[i\right]\setminus\left\{ 0\right\} \right)^{u}$.
Suppose that $p$ is in the smallest ideal of $\beta\mathbb{Z}\left[i\right]$
and that $l\mathbb{Z}\left[i\right]\in p$. Let $P\in p$. There exists
a minimal idempotent $q\in\beta\mathbb{Z}\left[i\right]$ such that
$p=p+q$ \cite[Theorem 2.8 and Lemma 1.30]{key-53}. Let $P'=\left\{ v\in P:-v+P\in q\right\} $.
Then $P'\in p$. Since $l\mathbb{Z}\left[i\right]\in p$ we can choose
$m\in\mathbb{Z}\left[i\right]\setminus\left\{ 0\right\} $ such that
$lm\in P'$. Let $Q=-lm+P\in q$. By \cite[Theorem 3.1(h)]{key-24},
there exists $\vec{z}\in\left(\mathbb{Z}\left[i\right]\setminus\left\{ 0\right\} \right)^{v}$
such that $A\vec{z}\in Q^{v}$ and $\left|z_{i}+mu_{i}\right|>0$
for every $i\in\left\{ 1,2,\ldots,v\right\} $. (The fact that the
entries of $\vec{z}$ can be chosen to be arbitrarily large follows
from \cite[Lemma 2.3]{key-24}. For every $r\in\mathbb{N}$, $\left\{ \vec{x}\in\left(\mathbb{Z}\left[i\right]\setminus\left\{ 0\right\} \right)^{v}:\left|x_{i}\right|>r\text{ for all }i\in\left\{ 1,2,\ldots,v\right\} \right\} $
is a member of every idempotent in $\beta\left(\mathbb{Z}\left[i\right]^{v}\right)$.)
Then $m\vec{z}+\vec{x}\in\left(\mathbb{Z}\left[i\right]\setminus\left\{ 0\right\} \right)^{v}$
and $A\left(m\vec{z}+\vec{x}\right)=m\vec{l}+A\vec{x}\in P^{u}$.

(b) $\Rightarrow$ (a). We may suppose that the entries of $A$ are
in $\mathbb{Z}\left[i\right]$, as we could replace $A$ by $nA$
for a suitable $n\in\mathbb{N}$. 

Suppose that $\vec{1}\notin\left\{ A\vec{z}:\vec{z}\in\mathbb{Q}\left[i\right]^{v}\right\} $.
Then there exists $\vec{u}\in\mathbb{Z}^{v}$ such that $\vec{u}\cdot A\vec{z}=0$
for every $\vec{z}\in\mathbb{Q}\left[i\right]^{v}$, but $\vec{u}\cdot\vec{1}\neq0$.

Choose a prime number $r$ satisfying $r>\left|l\right|$ and $r>\left|l\vec{u}\cdot\vec{1}\right|$.
Let $q$ be a minimal idempotent in $\beta\mathbb{Z}\left[i\right]$
and let $p=l+q$. Then 
\[
P=\left\{ v\in\mathbb{Z}\left[i\right]\setminus\left\{ 0\right\} :v\equiv l\left(\mod r\right)\right\} \in p\ \ \ \ \ \text{ (by [Lemma 2.14]).}
\]
 It follows from (b) that there exists $\vec{z}\in\mathbb{Z}\left[i\right]$
such that $A\vec{z}\in P^{u}$ and hence that $A\vec{z}\equiv l\vec{1}\,\left(\mod r\right)$
. This is a contradiction, as $\vec{u}\cdot A\vec{x}\equiv0\,\left(\mod r\right)$,
but $\vec{u}\cdot l\vec{1}$ is not divisible by $r$.
\end{proof}
The following theorem relates image partition regular matrices and
piecewise syndetic sets.
\begin{thm}
Let $A$ be a $u\times v$ image partition regular matrix over $\mathbb{Q}\left[i\right]$.
The following statements are equivalent:

(a) For every piecewise syndetic subset $P$ of $\mathbb{Z}\left[i\right]\setminus\left\{ 0\right\} $,
there exists $\vec{z}\in\left(\mathbb{Z}\left[i\right]\setminus\left\{ 0\right\} \right)^{v}$
such that $A\vec{z}\in P^{u}$;

(b) There exists $\vec{z}\in\left(\mathbb{Z}\left[i\right]\right)^{v}$such
that $A\vec{z}=\vec{1}$, where $\vec{1}$ denotes the vector in $\left(\mathbb{Z}\left[i\right]\right)^{u}$
whose entries are all equal to $1$.
\end{thm}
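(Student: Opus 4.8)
The plan is to prove the two implications separately. For $(b)\Rightarrow(a)$ I would combine Lemma~\ref{l=00003D1} with the algebraic description of piecewise syndeticity in Lemma~\ref{Lemma 1.3}(a); for $(a)\Rightarrow(b)$, which I expect to be the heart of the matter, I would convert the combinatorial hypothesis into congruence information and then invoke the local-to-global principle for linear systems over the Euclidean domain $\mathbb{Z}[i]$.

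For $(b)\Rightarrow(a)$, suppose $A\vec{z}_0=\vec{1}$ for some $\vec{z}_0\in(\mathbb{Z}[i])^v$. The key observation is that one may run the construction in the proof of Lemma~\ref{l=00003D1} $((a)\Rightarrow(b))$ with the choice $l=1$ and $\vec{u}=\vec{z}_0$. Since $1\cdot\mathbb{Z}[i]=\mathbb{Z}[i]$ lies in every ultrafilter, the hypothesis $l\mathbb{Z}[i]\in p$ becomes vacuous, and one obtains: for every $p$ in the smallest ideal $K(\beta\mathbb{Z}[i])$ and every $P\in p$, there is $\vec{z}\in(\mathbb{Z}[i]\setminus\{0\})^v$ with $A\vec{z}\in P^u$. (The passage through a minimal idempotent $q$ with $p=p+q$, the choice of $m\in P'$, the appeal to \cite[Theorem 3.1(h)]{key-24} and \cite[Lemma 2.3]{key-24} to force each entry of $\vec{z}=m\vec{z}_0+\vec{x}$ to be nonzero, and the computation $A\vec{z}=m\vec{1}+A\vec{x}\in P^u$, all go through verbatim, and no hypothesis on the entries of $\vec{z}_0$ is needed.) Now, given any piecewise syndetic $P$, Lemma~\ref{Lemma 1.3}(a) gives $\bar{P}\cap K(\beta\mathbb{Z}[i])\neq\emptyset$, so I may pick $p\in K(\beta\mathbb{Z}[i])$ with $P\in p$ and apply the previous sentence.

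For $(a)\Rightarrow(b)$, the aim is to manufacture an honest Gaussian-integer solution of $A\vec{z}=\vec{1}$ out of hypothesis (a). First I would feed (a) the congruence cosets $P_N=1+N\mathbb{Z}[i]$: for every non-unit $N$ the subgroup $N\mathbb{Z}[i]$ has finite index $|N|^2$, so $P_N$ is syndetic, hence piecewise syndetic, and it avoids $0$. Thus (a) produces $\vec{z}$ with $A\vec{z}\in P_N^{\,u}$, that is $A\vec{z}\equiv\vec{1}\pmod{N}$. Clearing denominators, write $A'=nA$ with $n\in\mathbb{N}$ so that $A'$ has entries in $\mathbb{Z}[i]$; then $A'\vec{z}\equiv n\vec{1}\pmod{N}$ is solvable over $\mathbb{Z}[i]$ for every $N$. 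The final step is the local-to-global principle: using a Smith normal form $A'=UDV$ with $U,V$ invertible over the principal ideal domain $\mathbb{Z}[i]$ and $D=\mathrm{diag}(d_1,\dots,d_r,0,\dots)$, solvability of $A'\vec{z}\equiv n\vec{1}\pmod{N}$ for all $N$ forces $d_i\mid c_i$ for $i\le r$ and $c_i=0$ for $i>r$, where $\vec{c}=U^{-1}(n\vec{1})$; these are exactly the conditions guaranteeing a genuine solution of $A'\vec{z}=n\vec{1}$, equivalently $A\vec{z}=\vec{1}$, over $\mathbb{Z}[i]$.

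The hard part is precisely this last direction. Over $\mathbb{N}$ one pins the image down to $\vec{1}$ using the order together with a single large prime; here no such size comparison is available, which is exactly the obstacle flagged in the abstract. I would sidestep the missing order by replacing order-based largeness with \emph{arithmetic} largeness, namely the residue cosets $P_N$, and by leaning on the fact that $\mathbb{Z}[i]$ is Euclidean, so that congruence-solvability at every modulus upgrades to solvability over $\mathbb{Z}[i]$ through the Smith form. Two minor points I would verify carefully are that each $P_N$ indeed lies in $\mathbb{Z}[i]\setminus\{0\}$ (true as soon as $N$ is a non-unit, since $0\in P_N$ would force $N\mid 1$) and that letting $N$ range over all prime powers suffices to extract every divisibility condition appearing in the Smith-form computation.
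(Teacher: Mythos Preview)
Your direction $(b)\Rightarrow(a)$ is exactly the paper's argument: run the proof of Lemma~\ref{l=00003D1}\,$((a)\Rightarrow(b))$ with $l=1$, and translate ``$P\in p$ for some $p\in K(\beta\mathbb{Z}[i])$'' into ``$P$ is piecewise syndetic'' via Lemma~\ref{Lemma 1.3}(a).

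For $(a)\Rightarrow(b)$ you take a genuinely different route. The paper's one-line proof simply points back to the contrapositive argument in Lemma~\ref{l=00003D1}\,$((b)\Rightarrow(a))$: assume $\vec{1}$ is not in the $\mathbb{Q}[i]$-column span of $A$, pick a linear functional $\vec{u}$ killing the image but not $\vec{1}$, choose one large prime $r$, and test hypothesis (a) on the single residue class $\{v:v\equiv 1\pmod r\}$ to obtain a contradiction. Read literally, that argument only yields a solution over $\mathbb{Q}[i]$, whereas condition (b) of the present theorem asks for a solution over $\mathbb{Z}[i]$ (and the example $A=(2)$ shows this distinction is real). Your approach sidesteps this: you feed hypothesis (a) the residue cosets $1+N\mathbb{Z}[i]$ for \emph{every} non-unit $N$, obtaining $A\vec z\equiv\vec 1\pmod N$ for all $N$, and then invoke Smith normal form over the Euclidean domain $\mathbb{Z}[i]$ to upgrade universal congruence-solvability to an honest $\mathbb{Z}[i]$-solution. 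Both arguments rest on the same device---applying (a) to arithmetic-progression sets---but the paper uses one prime to rule out a rational obstruction, while you use all moduli to certify integral solvability. Your version is more self-contained and makes explicit the passage to a $\mathbb{Z}[i]$-solution that the paper's sketch leaves to the reader.
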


\begin{proof}
This follows easily from Lemma \ref{Lemma 1.3}(a) and the proof of
(a) $\Leftrightarrow$ (b) in Lemma \ref{l=00003D1}, taking $l=1$.
\end{proof}
In \cite{key-9} it was shown that if \textquotedblleft large\textquotedblright{}
meant any of \textquotedblleft $\Delta$\textquotedblright , \textquotedblleft IP\textquotedblright ,
\textquotedblleft central\textquotedblright , \textquotedblleft central$^{*}$\textquotedblright ,
\textquotedblleft IP$^{*}$\textquotedblright , or \textquotedblleft $\Delta^{*}$\textquotedblright ,
and $B$ is a large subset of $\mathbb{N}$, then for every positive
$\alpha\in\mathbb{R}$ and every $\gamma\in\mathbb{R}$ with $0<\gamma<1$,
$\left\{ \left\lfloor \alpha n+\gamma\right\rfloor :n\in B\right\} $
is also large (in the same sense). In \cite{key-29} it was shown
that if $B$ is a piecewise syndetic subset of $\mathbb{Z}$,$l\in\mathbb{N}$,
and $AP^{l}=\left\{ \left(a,a+d,\ldots,a+\left(l-1\right)d\right):a,d\in\mathbb{Z}\right\} $,
the group of length $l$ arithmetic progressions (including the constant
ones), then $B^{l}\cap AP^{l}$ is piecewise syndetic in $AP^{l}$.
In \cite{key-8} a systematic study of this latter phenomenon was
undertaken. These results apply in the current context in terms of
when the set of images contained in a given set is large among the
set of all images.
\begin{thm}
\label{Theorem 1.5}Let $A$ be a $u\times v$ matrix with entries
from $\mathbb{Q}\left[i\right]$, let $I=\left\{ A\vec{z}:\vec{z}\in\left(\mathbb{Z}\left[i\right]\right)^{v}\right\} \cap\left(\mathbb{Z}\left[i\right]\right)^{u}$,
and let $C\subseteq\mathbb{Z}\left[i\right]$.

(a) If $I\neq\emptyset$, \textquotedblleft large\textquotedblright{}
is any of \textquotedblleft IP$^{*}$\textquotedblright , \textquotedblleft $\Delta^{*}$\textquotedblright ,
\textquotedblleft PS{*}\textquotedblright , or \textquotedblleft central{*}\textquotedblright ,
and $C$ is large in $\mathbb{Z}\left[i\right]$, then $I\cap C^{u}$
is large in $I$.

(b) If $\bar{1}\in I$, \textquotedblleft large\textquotedblright{}
is any of \textquotedblleft piecewise syndetic\textquotedblright ,
\textquotedblleft central\textquotedblright , or \textquotedblleft thick\textquotedblright ,
and $C$ is large in $\mathbb{Z}\left[i\right]$, then $I\cap C^{u}$
is large in $I$.
\end{thm}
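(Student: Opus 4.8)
The plan is to recast every instance of ``large'' through the algebraic characterizations of Lemma~\ref{Lemma 1.3} and to compare the structure of $\beta I$ with that of $\beta S$, where $S=\mathbb{Z}\left[i\right]$. Since $I$ is a subgroup of $S^{u}$, each coordinate projection $\pi_{j}\colon I\to S$ is a homomorphism; assembling them gives a continuous homomorphism $\Psi\colon\beta I\to\left(\beta S\right)^{u}$, $\Psi\left(q\right)=\left(\widetilde{\pi}_{1}\left(q\right),\ldots,\widetilde{\pi}_{u}\left(q\right)\right)$, whose image is the closed subsemigroup $T=\mathrm{cl}_{\left(\beta S\right)^{u}}\left(I\right)$. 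I would use two standard structural facts about the product semigroup $\left(\beta S\right)^{u}$: that its smallest ideal is $K\left(\left(\beta S\right)^{u}\right)=\left(K\left(\beta S\right)\right)^{u}$, with minimal idempotents exactly the tuples of minimal idempotents; and that for a closed subsemigroup meeting the smallest ideal, $K\left(T\right)=T\cap K\left(\left(\beta S\right)^{u}\right)$. The diagonal map $d\colon S\to I$, $d\left(s\right)=s\vec{1}$, which is available precisely in the setting of part~(b) (where $\vec{1}\in I$, hence $s\vec{1}\in I$ for all $s$), is the second main tool.

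For part~(a) the arguments are pointwise over $\beta I$. For IP$^{*}$: given any idempotent $q\in\beta I$, each $\widetilde{\pi}_{j}\left(q\right)$ is an idempotent of $\beta S$, so $C\in\widetilde{\pi}_{j}\left(q\right)$ for every $j$, whence $I\cap C^{u}=\bigcap_{j}\left\{ \vec{x}\in I:x_{j}\in C\right\} \in q$; as $q$ was arbitrary, Lemma~\ref{Lemma 1.3}(g) gives the claim. For PS$^{*}$ and central$^{*}$ the same bookkeeping works once one knows that each $\widetilde{\pi}_{j}\left(q\right)$ lies in $K\left(\beta S\right)$ (respectively is a minimal idempotent) whenever $q\in K\left(\beta I\right)$ (respectively is a minimal idempotent): this follows because $\Psi\left[K\left(\beta I\right)\right]=K\left(T\right)$ and $K\left(T\right)\subseteq K\left(\left(\beta S\right)^{u}\right)=\left(K\left(\beta S\right)\right)^{u}$. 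For $\Delta^{*}$ I would argue combinatorially: a $\Delta$ set of $I$ is a $\Delta$ set of $S^{u}$, so it suffices to show that $C^{u}$ is $\Delta^{*}$ in $S^{u}$; given a sequence $\left\langle \vec{y}_{n}\right\rangle $ in $S^{u}$, colour each pair $n<m$ by the vector recording in which coordinates $-\vec{y}_{n}+\vec{y}_{m}$ lands in $C$, apply Ramsey's theorem to obtain an infinite homogeneous set $H$, and use that $C$ is $\Delta^{*}$ in $S$ on each coordinate subsequence to force the homogeneous colour to be $\left(1,\ldots,1\right)$, producing a pair with $-\vec{y}_{n}+\vec{y}_{m}\in C^{u}$.

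For part~(b) I would instead \emph{construct} the required witness diagonally. Thickness is direct: given a finite $\mathcal{F}\subseteq I$, let $G\subseteq S$ collect all coordinate entries appearing in $\mathcal{F}$; thickness of $C$ yields $t$ with $G+t\subseteq C$, and then $\vec{x}=t\vec{1}\in I$ satisfies $\mathcal{F}+\vec{x}\subseteq I\cap C^{u}$. For central and piecewise syndetic, take (by Lemma~\ref{Lemma 1.3}(d),(a)) a minimal idempotent $p$, respectively a point $r\in K\left(\beta S\right)$, with $C\in p$ (resp. $C\in r$). The diagonal tuple $\left(p,\ldots,p\right)$ (resp. $\left(r,\ldots,r\right)$) lies in $T$, because for any sets $C_{j}\in p$ the intersection $\bigcap_{j}C_{j}\in p$ is nonempty and any $t$ in it gives $t\vec{1}\in I$ with $j$-th coordinate in $C_{j}$; this tuple is a minimal idempotent (resp. lies in $K$) of $\left(\beta S\right)^{u}$, hence of $T$, since $T$ meets $K\left(\left(\beta S\right)^{u}\right)$. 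Lifting it through the surjection $\Psi$ produces $q\in\beta I$ that is a minimal idempotent (resp. lies in $K\left(\beta I\right)$) with $\widetilde{\pi}_{j}\left(q\right)=p$ (resp. $=r$) for all $j$, so $I\cap C^{u}\in q$; Lemma~\ref{Lemma 1.3}(d),(a) then finishes.

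The step I expect to be the main obstacle is guaranteeing that the relevant ideal-theoretic object of $\left(\beta S\right)^{u}$ actually lives inside the subsemigroup $T=\mathrm{cl}\left(I\right)$ with a single witness valid in all $u$ coordinates at once. In part~(b) this is exactly what the diagonal $d$ delivers, and this is where the hypothesis $\vec{1}\in I$ (equivalently, the solvability condition of Lemma~\ref{l=00003D1}) is indispensable; for PS$^{*}$ and central$^{*}$ in part~(a) it reappears as the requirement that $T\cap K\left(\left(\beta S\right)^{u}\right)\neq\emptyset$. The absence of a linear order on $\mathbb{C}$, which in the classical arguments over $\mathbb{N}$ was used to select large or positive witnesses, never enters here: all selections are made inside $\beta S$ and $\beta I$, and wherever the original proofs chose entries to be large, one instead invokes the Gaussian-prime and congruence-modulo-$r$ device used in Lemma~\ref{l=00003D1} and in \cite[Lemma 2.3]{key-24} (the set of vectors with all $\left|x_{i}\right|$ large belongs to every idempotent).
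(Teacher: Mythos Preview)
Your argument is correct and, in substance, is an explicit unpacking of what the paper obtains by citation. The paper's own proof is two lines: for part~(a) it invokes \cite[Corollary~2.3]{key-8} (which needs only that $I$ be a subsemigroup of $\left(\mathbb{Z}[i]\right)^{u}$) for IP$^{*}$ and $\Delta^{*}$, and \cite[Corollary~2.7]{key-8} for PS$^{*}$ and central$^{*}$ (which needs in addition that each projection $\pi_{j}[I]$ be piecewise syndetic in $\mathbb{Z}[i]$, verified by observing $z\mathbb{Z}[i]\subseteq\pi_{j}[I]$ for any nonzero $z\in\pi_{j}[I]$); for part~(b) it notes that the full diagonal $\{(a,\ldots,a)^{T}:a\in\mathbb{Z}[i]\}$ lies in $I$ and applies \cite[Theorem~3.7]{key-8}. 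Your map $\Psi$, the Ramsey colouring for $\Delta^{*}$, and the diagonal lift via $d(s)=s\vec{1}$ are precisely the ingredients of those Bergelson--Hindman results, so the route is not genuinely different, only made self-contained.

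There is one point you correctly flag but do not verify: for PS$^{*}$ and central$^{*}$ in part~(a) you need $T\cap K\bigl((\beta S)^{u}\bigr)\neq\emptyset$ before you may conclude $K(T)=T\cap K\bigl((\beta S)^{u}\bigr)$. Coordinate-wise piecewise syndeticity of the $\pi_{j}[I]$ does not by itself produce a \emph{single} point in the product kernel lying in $\overline{I}$. The paper supplies exactly this in Lemma~\ref{Lemma 1.6}(a): pick any $\vec{y}=A\vec{z}\in I$ and any minimal idempotent $p\in\beta\mathbb{Z}[i]$; by \cite[Lemma~2.2]{key-24} each $y_{j}p\in K(\beta\mathbb{Z}[i])$, and the tuple $(y_{1}p,\ldots,y_{u}p)$ is shown to lie in $\overline{I}$ via the scalar multiples $a\vec{y}$. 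You should either insert this step or cite Lemma~\ref{Lemma 1.6}(a) at that point; with it, your argument is complete.
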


\begin{proof}
(a) For IP{*} and $\Delta${*}, \cite[Corollary 2.3]{key-8} requires
only that $I$ be a subsemigroup of $\left(\mathbb{Z}\left[i\right]\right)^{u}$.
For PS{*} and central{*}, \cite[Corollary 2.7]{key-8} requires in
addition that for each $i\in\left\{ 1,2,\ldots,u\right\} $, the $i$-th
projection $\pi_{i}\left[I\right]$ be piecewise syndetic in $\mathbb{Z}\left[i\right]$.
This trivially holds because, if $z\in\pi_{i}\left[I\right]$, then
$z\mathbb{Z}\left[i\right]\in\pi_{i}\left[I\right]$. 

(b) Letting $E=I$, we have that 
\[
\left\{ \begin{array}{c}
a\\
a\\
\vdots\\
a
\end{array}:a\in\mathbb{Z}\left[i\right]\right\} \subseteq E
\]
 so that \cite[Theorem 3.7]{key-8} applies.
\end{proof}
We shall be concerned for the rest of this section with establishing
analogues of \cite[Theorem 3.1(i)]{key-24} for the other notions
of largeness. That is, we wish to determine conditions that guarantee
that if a set $C$ is \textquotedblleft large\textquotedblright{}
in $\mathbb{Z}\left[i\right]$, then $\left\{ \vec{x}\in\left(\mathbb{Z}\left[i\right]\right)^{v}:A\vec{x}\in C^{u}\right\} $
is \textquotedblleft large\textquotedblright{} in $\left(\mathbb{Z}\left[i\right]\right)^{v}$.
\begin{lem}
\label{Lemma 1.6}Let $A$ be a $u\times v$ matrix with entries from
$\mathbb{Z}\left[i\right]$, define $\varphi:\left(\mathbb{Z}\left[i\right]\right)^{v}\rightarrow\left(\mathbb{Z}\left[i\right]\right)^{u}$
by $\varphi\left(x\right)=A\vec{x}$, and let $\tilde{\varphi}:\beta\left(\mathbb{Z}\left[i\right]^{v}\right)\rightarrow\left(\beta\mathbb{Z}\left[i\right]\right)^{u}$
be its continuous extension. Then $\tilde{\varphi}$ is a homomorphism
and $K\left(\left(\beta\mathbb{Z}\left[i\right]\right)^{u}\right)=\left(K\left(\beta\mathbb{Z}\left[i\right]\right)\right)^{u}$.

(a) If there exists $\vec{z}\in\left(\mathbb{Z}\left[i\right]\right)^{v}$
such that $A\vec{z}\in\left(\mathbb{Z}\left[i\right]\right)^{u}$,
then $\tilde{\varphi}\left[\beta\left(\mathbb{Z}\left[i\right]\right)^{v}\right]\cap K\left(\left(\beta\mathbb{Z}\left[i\right]\right)\right)^{u}\neq\emptyset$.

(b) If for all $\vec{z}\in\mathbb{Z}\left[i\right]^{v}$, $A\vec{z}\in\mathbb{Z}\left[i\right]^{u}$,
then $\tilde{\varphi}\left[K\left(\beta\left(\mathbb{Z}\left[i\right]^{v}\right)\right)\right]\subseteq K\left(\left(\beta\mathbb{Z}\left[i\right]\right)\right)^{u}$. 
\end{lem}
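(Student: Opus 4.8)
The plan is to reduce both parts to one-dimensional statements via the coordinate projections, treating the two structural facts in the first sentence as a toolkit that I establish first. That $\tilde{\varphi}$ is a homomorphism follows from the standard principle that the continuous extension of a homomorphism from a discrete semigroup into the topological centre of a compact right topological semigroup is again a homomorphism: here $\varphi\bigl[(\mathbb{Z}\left[i\right])^{v}\bigr]\subseteq(\mathbb{Z}\left[i\right])^{u}$, and every point of $(\mathbb{Z}\left[i\right])^{u}$ lies in the topological centre of $(\beta\mathbb{Z}\left[i\right])^{u}$, so the relevant theorem of \cite{key-53} applies. The identity $K\bigl((\beta\mathbb{Z}\left[i\right])^{u}\bigr)=\bigl(K(\beta\mathbb{Z}\left[i\right])\bigr)^{u}$ is the smallest-ideal-of-a-finite-product formula of \cite{key-53}, whose content is that the minimal left ideals of a finite direct product are exactly the products of minimal left ideals of the factors.

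For the numbered parts I would work coordinatewise. For each $i\in\{1,\ldots,u\}$ let $\pi_{i}\colon(\beta\mathbb{Z}\left[i\right])^{u}\rightarrow\beta\mathbb{Z}\left[i\right]$ be the $i$-th projection and set $h_{i}=\pi_{i}\circ\tilde{\varphi}$, a continuous homomorphism of $\beta\bigl((\mathbb{Z}\left[i\right])^{v}\bigr)$ into $\beta\mathbb{Z}\left[i\right]$ extending the map $\vec{z}\mapsto\sum_{j=1}^{v}a_{ij}z_{j}$. Its image on $(\mathbb{Z}\left[i\right])^{v}$ is the additive subgroup $d_{i}\mathbb{Z}\left[i\right]$, where $d_{i}$ is a greatest common divisor of the entries of row $i$ (using that $\mathbb{Z}\left[i\right]$ is a principal ideal domain). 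Assuming, as one may for the matrices of interest, that no row is identically zero, $d_{i}\neq0$, so $d_{i}\mathbb{Z}\left[i\right]$ has finite index $|d_{i}|^{2}$ in $\mathbb{Z}\left[i\right]$ and is therefore syndetic. Hence $h_{i}\bigl[\beta\bigl((\mathbb{Z}\left[i\right])^{v}\bigr)\bigr]=\overline{d_{i}\mathbb{Z}\left[i\right]}$ is a compact subsemigroup whose closure meets every left ideal of $\beta\mathbb{Z}\left[i\right]$ by Lemma \ref{Lemma 1.3}(c), in particular the minimal ones, so $\overline{d_{i}\mathbb{Z}\left[i\right]}\cap K(\beta\mathbb{Z}\left[i\right])\neq\emptyset$. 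By the theorem of \cite{key-53} that a closed subsemigroup meeting $K$ has smallest ideal equal to its intersection with $K$, this yields $K\bigl(\overline{d_{i}\mathbb{Z}\left[i\right]}\bigr)=\overline{d_{i}\mathbb{Z}\left[i\right]}\cap K(\beta\mathbb{Z}\left[i\right])\subseteq K(\beta\mathbb{Z}\left[i\right])$.

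Now I would assemble the conclusions. Since a continuous homomorphism of compact right topological semigroups carries the smallest ideal onto the smallest ideal of its (necessarily compact) image, $h_{i}\bigl[K\bigl(\beta((\mathbb{Z}\left[i\right])^{v})\bigr)\bigr]=K\bigl(\overline{d_{i}\mathbb{Z}\left[i\right]}\bigr)\subseteq K(\beta\mathbb{Z}\left[i\right])$ for every $i$. Thus for each $p\in K\bigl(\beta((\mathbb{Z}\left[i\right])^{v})\bigr)$ the point $\tilde{\varphi}(p)=(h_{1}(p),\ldots,h_{u}(p))$ has all coordinates in $K(\beta\mathbb{Z}\left[i\right])$, so $\tilde{\varphi}(p)\in\bigl(K(\beta\mathbb{Z}\left[i\right])\bigr)^{u}=K\bigl((\beta\mathbb{Z}\left[i\right])^{u}\bigr)$; this is exactly the inclusion $\tilde{\varphi}\bigl[K(\beta((\mathbb{Z}\left[i\right])^{v}))\bigr]\subseteq K\bigl((\beta\mathbb{Z}\left[i\right])^{u}\bigr)$ of part (b). Part (a) is then immediate, since $K\bigl(\beta((\mathbb{Z}\left[i\right])^{v})\bigr)$ is nonempty and any such $\tilde{\varphi}(p)$ witnesses $\tilde{\varphi}\bigl[\beta((\mathbb{Z}\left[i\right])^{v})\bigr]\cap K\bigl((\beta\mathbb{Z}\left[i\right])^{u}\bigr)\neq\emptyset$.

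The step I expect to require the most care is this coordinatewise descent: checking that $h_{i}$ surjects onto $\overline{d_{i}\mathbb{Z}\left[i\right]}$, that the finite-index subgroup's closure meets $K(\beta\mathbb{Z}\left[i\right])$, and that the ``subsemigroup meeting $K$'' theorem is applied cleanly. The one genuinely delicate point is the degenerate zero-row case $d_{i}=0$, where the $i$-th coordinate projects to the principal ultrafilter at $0$, a non-minimal idempotent, and the inclusion into $K(\beta\mathbb{Z}\left[i\right])$ fails; this is precisely why the hypothesis that guarantees nonzero rows (as holds for the image partition regular matrices to which the lemma is applied) is what is really being used.
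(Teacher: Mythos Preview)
Your argument is correct, but it is organized quite differently from the paper's. The paper proves (a) first by an explicit construction: it fixes some $\vec{z}$ with $A\vec{z}=\vec{y}$, picks a minimal idempotent $p\in\beta\mathbb{Z}[i]$, and checks via a basic-neighborhood argument that the point $\vec{y}\,p=(y_{1}p,\ldots,y_{u}p)\in\bigl(K(\beta\mathbb{Z}[i])\bigr)^{u}$ lies in $\overline{\{A\vec{w}:\vec{w}\in\mathbb{Z}[i]^{v}\}}=\tilde{\varphi}\bigl[\beta(\mathbb{Z}[i]^{v})\bigr]$. Part (b) is then obtained globally, by applying the ``subsemigroup meeting $K$'' theorem and the ``homomorphic image of $K$'' fact once, to the whole image $\tilde{\varphi}\bigl[\beta(\mathbb{Z}[i]^{v})\bigr]$ sitting inside $(\beta\mathbb{Z}[i])^{u}$. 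You reverse the order: you establish (b) coordinatewise, replacing the paper's explicit witness for (a) by the observation that each row image $d_{i}\mathbb{Z}[i]$ is a finite-index (hence syndetic) subgroup, and then read off (a) as a corollary. Both routes invoke the same two theorems from \cite{key-53}; the paper's is shorter and avoids the PID/gcd bookkeeping, while yours makes the contribution of each row transparent and, as you note, cleanly isolates the zero-row degeneracy that both arguments tacitly need to exclude (in the paper's version the hidden requirement is $y_{i}\neq 0$ so that $y_{i}p\in K(\beta\mathbb{Z}[i])$).
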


\begin{proof}
By \cite[Corollary 4.22]{key-53} we have that $\tilde{\varphi}$
is a homomorphism, and by \cite[Theorem 2.23]{key-53} $K\left(\left(\beta\mathbb{Z}\left[i\right]\right)^{u}\right)=\left(K\left(\beta\mathbb{Z}\left[i\right]\right)\right)^{u}$. 

(a). Since $\tilde{\varphi}\left[\beta\left(\mathbb{Z}\left[i\right]\right)^{v}\right]=\overline{\left\{ A\vec{z}:\vec{z}\in\left(\mathbb{Z}\left[i\right]\right)^{v}\right\} }$,
we need to show that 
\[
\overline{\left\{ A\vec{z}:\vec{z}\in\left(\mathbb{Z}\left[i\right]\right)^{v}\right\} }\cap\left(K\left(\beta\mathbb{Z}\left[i\right]\right)\right)^{u}\neq\emptyset.
\]
Pick $\vec{z}\in\mathbb{Z}\left[i\right]^{v}$ such that $A\vec{z}\in\mathbb{Z}\left[i\right]^{u}$.
Pick any minimal idempotent $p$ in $\beta\mathbb{Z}\left[i\right]$.
Then by Lemma \cite[Lemma 2.2]{key-24} $\vec{p}=\vec{y}p\in\left(K\left(\beta\mathbb{Z}\left[i\right]\right)\right)^{u}$.
To see that $\vec{p}\in\overline{\left\{ A\vec{w}:\vec{w}\in\mathbb{Z}\left[i\right]^{v}\right\} }$,
let $U$ be a neighborhood of $\vec{p}$ and for each $i\in\left\{ 1,2,\ldots,u\right\} $,
pick $D_{i}\in p$ such that $\times_{i=1}^{u}\overline{y_{i}D_{i}}\subseteq U$.
Pick $a\in\bigcap_{i=1}^{u}D_{i}$. Then $A\left(a\vec{z}\right)=a\vec{y}\in U$.

(b). By part (a), $\tilde{\varphi}\left[\beta\left(\mathbb{Z}\left[i\right]\right)^{v}\right]\cap K\left(\left(\beta\mathbb{Z}\left[i\right]\right)\right)^{u}\neq\emptyset$,
so by \cite[Theorem 1.65]{key-53}, $K\left(\tilde{\varphi}\left[\beta\left(\mathbb{Z}\left[i\right]\right)^{v}\right]\right)=\tilde{\varphi}\left[\beta\left(\mathbb{Z}\left[i\right]\right)^{v}\right]\cap K\left(\left(\beta\mathbb{Z}\left[i\right]\right)\right)^{u}$
(because $\tilde{\varphi}\left[\beta\left(\mathbb{Z}\left[i\right]^{v}\right)\right]\subseteq\left(\beta\mathbb{Z}\left[i\right]\right)^{u}$).
Also by \cite[Exercise 1.7.3]{key-53}, $K\left(\tilde{\varphi}\left[\beta\left(\mathbb{Z}\left[i\right]^{v}\right)\right]\right)=\tilde{\varphi}\left[K\left(\beta\left(\mathbb{Z}\left[i\right]^{v}\right)\right)\right]$.
\end{proof}
\begin{thm}
\label{Theorem 1.7}Let $A$ be a $u\times v$ matrix with entries
from $\mathbb{Q}\left[i\right]$ and assume that for all $\vec{z}\in\mathbb{Z}\left[i\right]^{v}$,
every entry of $A\vec{z}$ has both positive real and imaginary parts.
If \textquotedblleft large\textquotedblright{} is any of \textquotedblleft IP{*}\textquotedblright ,
\textquotedblleft $\Delta${*}\textquotedblright , or \textquotedblleft central{*}\textquotedblright ,
and $C$ is large in $\mathbb{Z}\left[i\right]$, then $W=\left\{ \vec{z}\in\left(\mathbb{Z}\left[i\right]\setminus\left\{ 0\right\} \right)^{v}:A\vec{z}\in C^{u}\right\} $
is large in $\mathbb{Z}\left[i\right]^{v}$.
\end{thm}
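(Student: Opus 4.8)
The plan is to push everything through the continuous homomorphism $\tilde{\varphi}\colon\beta\left(\mathbb{Z}\left[i\right]^{v}\right)\to\left(\beta\mathbb{Z}\left[i\right]\right)^{u}$ supplied by Lemma \ref{Lemma 1.6}, which extends $\varphi(\vec{z})=A\vec{z}$. As in the proof of Lemma \ref{l=00003D1} I would first replace $A$ by $nA$ for a suitable $n\in\mathbb{N}$ so that the entries lie in $\mathbb{Z}\left[i\right]$; multiplication by a positive integer preserves the sign of both the real and the imaginary part, so the hypothesis that the entries of $A\vec{z}$ have positive real and imaginary parts is kept. Writing $L_{i}(\vec{z})=(A\vec{z})_{i}$ for the $i$-th row form, the $i$-th coordinate of $\tilde{\varphi}$ is the extension $\widetilde{L_{i}}$, and these extensions satisfy $D\in\widetilde{L_{i}}(p)$ if and only if $L_{i}^{-1}(D)\in p$ for $D\subseteq\mathbb{Z}\left[i\right]$. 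Hence the whole argument reduces to a single implication: if $C$ belongs to every coordinate $q_{i}=\widetilde{L_{i}}(p)$ of $\tilde{\varphi}(p)$, then $L_{i}^{-1}(C)\in p$ for each $i$, so $\left\{ \vec{z}:A\vec{z}\in C^{u}\right\} =\bigcap_{i=1}^{u}L_{i}^{-1}(C)\in p$. Throughout I would work inside the positive cone $\left\{ a+bi:a,b\geq1\right\}$, a sub-semigroup of $\left(\mathbb{Z}\left[i\right],+\right)$ with no identity; this is the device that replaces the order of $\mathbb{N}$, since on it every IP set and every $\Delta$ set is generated by vectors with strictly positive (hence nonzero) real and imaginary parts, and the positivity hypothesis guarantees that $A$ maps this cone into the corresponding cone of $\mathbb{Z}\left[i\right]^{u}$.

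For the IP$^{*}$ and central$^{*}$ cases I would invoke Lemma \ref{Lemma 1.3}(g) and (e). Let $p$ be an idempotent, respectively a minimal idempotent, of $\beta\left(\mathbb{Z}\left[i\right]^{v}\right)$; the goal is $W\in p$. As $\tilde{\varphi}$ is a homomorphism, $\tilde{\varphi}(p)$ is idempotent, so each $q_{i}$ is an idempotent of $\beta\mathbb{Z}\left[i\right]$; in the minimal case Lemma \ref{Lemma 1.6} together with $K\left(\left(\beta\mathbb{Z}\left[i\right]\right)^{u}\right)=\left(K\left(\beta\mathbb{Z}\left[i\right]\right)\right)^{u}$ forces $\tilde{\varphi}(p)\in\left(K\left(\beta\mathbb{Z}\left[i\right]\right)\right)^{u}$, so each $q_{i}$ is a minimal idempotent. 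Using the fact recorded in the proof of Lemma \ref{l=00003D1}, that the sets $\left\{ \vec{x}:|x_{j}|>r\text{ for all }j\right\}$ lie in every idempotent, each $q_{i}$ is nonprincipal, whence $C\in q_{i}$ for all $i$ by the IP$^{*}$, respectively central$^{*}$, characterization of $C$. The reduction of the first paragraph then gives $\left\{ \vec{z}:A\vec{z}\in C^{u}\right\} \in p$, and the case $r=0$ of the same large-modulus fact gives $\left(\mathbb{Z}\left[i\right]\setminus\left\{ 0\right\} \right)^{v}\in p$, so $W\in p$.

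For the $\Delta^{*}$ case I would argue from the definition and reduce to showing that $C^{u}$ is $\Delta^{*}$ in $\mathbb{Z}\left[i\right]^{u}$. Given a sequence $\left\langle \vec{z}_{n}\right\rangle$ whose successive differences lie in the positive cone, set $\vec{w}_{n}=A\vec{z}_{n}$; by positivity the differences $\vec{w}_{m}-\vec{w}_{n}=A(\vec{z}_{m}-\vec{z}_{n})$ again lie in the cone. If no pair $n<m$ gave $\vec{w}_{m}-\vec{w}_{n}\in C^{u}$, I would colour each pair $\left\{ n,m\right\}$ by the least index $i$ with $(\vec{w}_{m}-\vec{w}_{n})_{i}\notin C$ and apply the infinite Ramsey theorem to get an infinite $H$ and a fixed $i_{0}$ for which the difference set of $\left\langle (\vec{w}_{n})_{i_{0}}\right\rangle _{n\in H}$ misses $C$; but that difference set is a $\Delta$ set in $\mathbb{Z}\left[i\right]$, contradicting $C$ being $\Delta^{*}$. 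Thus some $\vec{w}_{m}-\vec{w}_{n}=A(\vec{z}_{m}-\vec{z}_{n})$ lies in $C^{u}$, and because $\vec{z}_{m}-\vec{z}_{n}$ has nonzero entries we conclude $\vec{z}_{m}-\vec{z}_{n}\in W$, so $W$ meets the given $\Delta$ set. (Alternatively one could quote the recurrence results of \cite{key-8} used already in Theorem \ref{Theorem 1.5}.)

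The step I expect to be the main obstacle is exactly the interface between the algebra of $\beta\mathbb{Z}\left[i\right]$ and the absent order: making the positivity hypothesis carry the weight that positivity of integers carries in \cite{key-43}. Concretely, one must ensure that the coordinate idempotents $q_{i}$ are nonprincipal so that the IP$^{*}$/central$^{*}$ property of $C$ can be applied, and that in the $\Delta^{*}$ case the $\Delta$ sets in play avoid the coordinate hyperplanes $z_{j}=0$ so that the preimage actually lands in $\left(\mathbb{Z}\left[i\right]\setminus\left\{ 0\right\} \right)^{v}$. Both hinge on the cone $\left\{ a+bi:a,b\geq1\right\}$ being preserved by $A$, and checking that this single device simultaneously services all three notions of largeness is where the real care lies.
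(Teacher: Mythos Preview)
Your overall strategy matches the paper's: push the problem through $\tilde{\varphi}$ and Lemma~\ref{Lemma 1.3} for the IP$^{*}$ and central$^{*}$ cases, and run a Ramsey argument on the image sequence for $\Delta^{*}$. Your single $u$-colour Ramsey application (colour $\{n,m\}$ by the least bad coordinate) is equivalent to the paper's coordinate-by-coordinate inductive refinement, and your IP$^{*}$ and central$^{*}$ paragraphs are essentially the paper's.

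The genuine gap is your reduction to integer entries. You write ``replace $A$ by $nA$ \dots\ as in the proof of Lemma~\ref{l=00003D1}'', but that reduction was free in Lemma~\ref{l=00003D1} because both conditions there are invariant under scaling $A$. Here they are not: $\{\vec z:(nA)\vec z\in C^{u}\}$ is in general a different set from $W=\{\vec z:A\vec z\in C^{u}\}$, so proving the theorem for $nA$ with the \emph{same} $C$ tells you nothing about $W$. What the paper actually does is use the identity
\[
\bigl\{\vec z:(dA)\vec z\in(dC)^{u}\bigr\}=\bigl\{\vec z:A\vec z\in C^{u}\bigr\},
\]
which transfers the burden to showing that $dC$ is again IP$^{*}$ (respectively $\Delta^{*}$, central$^{*}$) in $\mathbb{Z}[i]$ whenever $C$ is. This is not automatic, and the paper devotes a separate argument to each notion: a sum-subsystem together with \cite[Lemma 2.1]{key-24} for IP$^{*}$, a pass to a subsequence constant modulo $d$ for $\Delta^{*}$, and the fact that $p\mapsto\frac{1}{d}p$ preserves minimal idempotents (\cite[Lemma 2.2]{key-24}) for central$^{*}$. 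Without this step you cannot invoke Lemma~\ref{Lemma 1.6}, whose hypothesis is that $A$ has entries in $\mathbb{Z}[i]$ so that $\varphi$ lands in $\mathbb{Z}[i]^{u}$; with $\mathbb{Q}[i]$ entries the continuous extension goes into $(\beta\mathbb{Q}[i])^{u}$ and the whole homomorphism machinery you rely on is not yet available. Your ``positive cone'' device does not address this issue; it is relevant only to the $\Delta^{*}$ step, where it plays the same role as the paper's use of the positivity hypothesis to ensure the coordinate sequences $\langle y_{n}^{i}\rangle$ are genuinely increasing.
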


\begin{proof}
We show first that it suffices to prove the theorem under the additional
assumption that all entries of $A$ are in $\mathbb{Z}\left[i\right]$.
Indeed, suppose we have done so and pick $d\in\mathbb{N}$ such that
all entries of $nA$ are in $\mathbb{Z}\left[i\right]$. We claim
that $dC$ is large in $\mathbb{Z}\left[i\right]$, which we check
individually.

Assume first that \textquotedblleft large\textquotedblright{} is \textquotedblleft IP{*}\textquotedblright{}
and let a sequence $\left\langle z_{n}\right\rangle _{n=1}^{\infty}$
in $\mathbb{Z}\left[i\right]$ be given. By \cite[Theorem 5.14]{key-53}
and \cite[Lemma 2.1]{key-24} pick a sum subsystem $\left\langle w_{n}\right\rangle _{n=1}^{\infty}$
of $\left\langle z_{n}\right\rangle _{n=1}^{\infty}$ such that $z\mid y_{n}$
for each $n$. Pick $a\in C\cap FS\left(\left\langle \frac{y_{n}}{z}\right\rangle _{n=1}^{\infty}\right)$.
Then $da\in dC\cap FS\left(\left\langle z_{n}\right\rangle _{n=1}^{\infty}\right)$.

Next assume that \textquotedblleft large\textquotedblright{} is \textquotedblleft $\Delta${*}\textquotedblright .
Let a set $B$ in $\mathbb{Z}\left[i\right]$ be given and choose
a sequence $\left\langle z_{n}\right\rangle _{n=1}^{\infty}\subseteq\mathbb{Z}\left[i\right]\setminus\left\{ 0\right\} $
such that for every $n,m\in\mathbb{N}$ with $n<m$, $z_{m}\in z_{n}+B$.
(In particular, for each $n<m,\ \left|z_{n}\right|<\left|z_{m}\right|$.)
By passing to a subsequence, we may presume that for each $n<m$,
$z_{n}\equiv z_{m}\left(\text{mod }d\right)$. Pick $j\in\left\{ 1,2,\ldots,d\right\} $
such that for each $n\in\mathbb{N}$, $z_{n}+j\equiv0\left(\text{mod }d\right)$.
Then $\left\langle \frac{z_{n}+j}{d}\right\rangle _{n=1}^{\infty}$
is a sequence in $\mathbb{Z}\left[i\right]\setminus\left\{ 0\right\} $
so pick $n<m$ such that $\frac{z_{m}+j}{d}-\frac{z_{n}+j}{d}\in C$.
Then $z_{m}-z_{n}\in dC$. 

Finally assume that \textquotedblleft large\textquotedblright{} is
\textquotedblleft central{*}\textquotedblright . Let $p$ be a minimal
idempotent in $\beta\mathbb{Z}\left[i\right]$. By Lemma \cite[Lemma 2.2]{key-24},
$\frac{1}{d}p$ is a minimal idempotent so $C\in\frac{1}{d}p$ and
consequently $dC\in p$. Since we have established that $dC$ is large,
we have that $\left\{ \vec{z}\in\left(\mathbb{Z}\left[i\right]\setminus\left\{ 0\right\} \right)^{v}:dA\vec{z}\in\left(dC\right)^{u}\right\} $
is large, and 
\[
\left\{ \vec{z}\in\left(\mathbb{Z}\left[i\right]\setminus\left\{ 0\right\} \right)^{v}:dA\vec{z}\in\left(dC\right)^{u}\right\} =\left\{ \vec{z}\in\left(\mathbb{Z}\left[i\right]\setminus\left\{ 0\right\} \right)^{v}:A\vec{z}\in C^{u}\right\} 
\]
.

Thus we assume that all entries of $A$ are in $\mathbb{Z}\left[i\right]$.
Define $\varphi:\mathbb{Z}\left[i\right]^{v}\rightarrow\mathbb{Z}\left[i\right]^{u}$
by $\varphi\left(\vec{z}\right)=A\vec{z}$, and let $\tilde{\varphi}:\beta\left(\mathbb{Z}\left[i\right]^{v}\right)\rightarrow\left(\beta\mathbb{Z}\left[i\right]\right)^{u}$
be its continuous extension. Assume first that \textquotedblleft large\textquotedblright{}
is \textquotedblleft IP{*}\textquotedblright . Let $p$ be an idempotent
in $\beta\left(\mathbb{Z}\left[i\right]^{v}\right)$. We need to show
that $W\in p$. Since $\tilde{\varphi}$ is a homomorphism, $\tilde{\varphi}\left(p\right)$
is an idempotent in $\left(\beta\mathbb{Z}\left[i\right]\right)^{u}$
and so $\bar{C}^{u}$ is a neighborhood of $\tilde{\varphi}\left(p\right)$,
and hence $W\in p$ as required.

Next assume that \textquotedblleft large\textquotedblright{} is \textquotedblleft $\Delta${*}\textquotedblright .
Let $B$ be a set in $\mathbb{Z}\left[i\right]^{v}$ and pick a sequence
$\left\langle \vec{z}_{n}\right\rangle _{n=1}^{\infty}$ in $\left(\mathbb{Z}\left[i\right]\setminus\left\{ 0\right\} \right)^{v}$
such that for every $n,m\in\mathbb{N}$ with $n<m$, $\vec{z_{m}}\in\vec{z_{n}}+B$.
In particular, for each $n<m$, we have $\vec{z_{m}}-\vec{z_{n}}\in\left(\mathbb{Z}\left[i\right]\setminus\left\{ 0\right\} \right)^{v}$.
We need to show that there exists $n<m$ such that $\vec{z_{m}}-\vec{z_{n}}\in W$. 

For each $n\in\mathbb{N}$, let $\vec{y_{n}}=A\vec{z_{n}}$. Notice
that, for $n<m$ we have that all real and imaginary entries of $A\left(\vec{z_{m}}-\vec{z_{n}}\right)$
are positive, and consequently modulus of each entry of $\vec{y_{m}}$
is larger than the modulus of corresponding entry of $\vec{y_{n}}$.
By Ramsey\textquoteright s Theorem (\cite[Theorem 1.5]{key-32}),
pick an infinite subset $D_{1}$ of $\mathbb{N}$ such that for all
$n<m$ in $D_{1}$, $y_{m}^{1}-y_{n}^{1}\in C$ or for all $n<m$
in $D_{1}$, $y_{m}^{1}-y_{n}^{1}\in\left(\mathbb{Z}\left[i\right]\setminus\left\{ 0\right\} \right)\setminus C$.
Since $C$ is a $\Delta${*} set, the latter alternative is impossible,
so the former must hold. Inductively, given $i\in\left\{ 1,2,\ldots,u-1\right\} $,
choose by Ramsey\textquoteright s Theorem an infinite subset $D_{i+1}$
of $D_{i}$ such that for all $n<m$ in $D_{i+1}$, $y_{m}^{i+1}-y_{n}^{i+1}\in C$.
Having chosen $D_{u}$ pick $n<m$ in $D_{u}$. Then $\vec{z_{m}}-\vec{z_{n}}\in W$.
Finally assume that \textquotedblleft large\textquotedblright{} is
\textquotedblleft central{*}\textquotedblright , and let $p$ be a
minimal idempotent in $\beta\left(\mathbb{Z}\left[i\right]^{v}\right)$.
By Lemma \ref{Lemma 1.6}, $\tilde{\varphi}\left(p\right)$ is an
idempotent and $\tilde{\varphi}\left(p\right)\in\left(K\left(\beta\mathbb{Z}\left[i\right]\right)\right)^{u}$
so that $\tilde{\varphi}\left(p\right)\bar{C}^{u}$. 
\end{proof}
The requirement in Theorem \ref{Theorem 1.7} that for all , every
$\vec{z}\in\mathbb{Z}\left[i\right]^{v}$ entry of $A\vec{z}$ be
positive may not be omitted ( see \cite{key-43}).
\begin{thm}
Let $A$ be a $u\times v$ matrix with entries from $\mathbb{Z}\left[i\right]$
and assume that for all $\vec{z}\in\left(\mathbb{Z}\left[i\right]\setminus\left\{ 0\right\} \right)^{v}$,
every entry of $A\vec{z}\in\left(\mathbb{Z}\left[i\right]\setminus\left\{ 0\right\} \right)^{u}$.
If $C$ is PS{*} in $\mathbb{Z}\left[i\right]$, then $W=\left\{ \vec{z}\in\left(\mathbb{Z}\left[i\right]\setminus\left\{ 0\right\} \right)^{v}:A\vec{z}\in C^{u}\right\} $
is PS{*} in $\mathbb{Z}\left[i\right]^{v}$. 
\end{thm}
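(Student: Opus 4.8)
The plan is to run the standard Stone--\v{C}ech argument for starred notions, using the characterization of PS$^{*}$ from Lemma~\ref{Lemma 1.3}(h): a subset $B$ of a commutative semigroup $S$ is PS$^{*}$ if and only if $K\left(\beta S\right)\subseteq\bar{B}$. Thus it suffices to prove that $W\in p$ for every $p\in K\left(\beta\left(\mathbb{Z}\left[i\right]^{v}\right)\right)$. As in Lemma~\ref{Lemma 1.6}, set $\varphi\left(\vec{z}\right)=A\vec{z}$ and let $\tilde{\varphi}:\beta\left(\mathbb{Z}\left[i\right]^{v}\right)\rightarrow\left(\beta\mathbb{Z}\left[i\right]\right)^{u}$ be its continuous extension. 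Since every entry of $A$ is in $\mathbb{Z}\left[i\right]$, Lemma~\ref{Lemma 1.6}(b) applies and gives $\tilde{\varphi}\left[K\left(\beta\left(\mathbb{Z}\left[i\right]^{v}\right)\right)\right]\subseteq\left(K\left(\beta\mathbb{Z}\left[i\right]\right)\right)^{u}$. On the other side, because $C$ is PS$^{*}$ in $\mathbb{Z}\left[i\right]$, Lemma~\ref{Lemma 1.3}(h) gives $K\left(\beta\mathbb{Z}\left[i\right]\right)\subseteq\bar{C}$, whence $\left(K\left(\beta\mathbb{Z}\left[i\right]\right)\right)^{u}\subseteq\bar{C}^{u}$. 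Combining these, for every $p\in K\left(\beta\left(\mathbb{Z}\left[i\right]^{v}\right)\right)$ I get $\tilde{\varphi}\left(p\right)\in\bar{C}^{u}$.

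Next I would transfer this membership down to $p$. Since $\mathbb{Z}\left[i\right]$ is discrete, $\bar{C}$ is clopen in $\beta\mathbb{Z}\left[i\right]$, so $\bar{C}^{u}=\overline{C^{u}}$ is clopen in $\left(\beta\mathbb{Z}\left[i\right]\right)^{u}$ and is therefore a neighborhood of $\tilde{\varphi}\left(p\right)$. The standard property of the continuous extension then yields $\tilde{\varphi}\left(p\right)\in\overline{C^{u}}$ if and only if $\varphi^{-1}\left(C^{u}\right)=\left\{ \vec{z}\in\mathbb{Z}\left[i\right]^{v}:A\vec{z}\in C^{u}\right\} \in p$; equivalently, writing the $i$-th row map $\vec{z}\mapsto\left(A\vec{z}\right)_{i}$ and its extension $\tilde{f_{i}}\left(p\right)=\pi_{i}\left(\tilde{\varphi}\left(p\right)\right)$, one has $C\in\tilde{f_{i}}\left(p\right)$, i.e.\ $\left\{ \vec{z}:\left(A\vec{z}\right)_{i}\in C\right\} \in p$ for each $i$, and intersecting these finitely many members of the ultrafilter gives $W':=\varphi^{-1}\left(C^{u}\right)\in p$.

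It remains to reconcile $W'$ with $W$, since $W=W'\cap\left(\mathbb{Z}\left[i\right]\setminus\left\{ 0\right\} \right)^{v}$. The point that needs genuine care is that $\left(\mathbb{Z}\left[i\right]\setminus\left\{ 0\right\} \right)^{v}\in p$ for every $p\in K\left(\beta\left(\mathbb{Z}\left[i\right]^{v}\right)\right)$; note the idempotent fact cited earlier does not directly apply, as a general member of $K$ need not be idempotent. I would establish this by showing $\left(\mathbb{Z}\left[i\right]\setminus\left\{ 0\right\} \right)^{v}$ is itself PS$^{*}$ in $\mathbb{Z}\left[i\right]^{v}$: its complement $\bigcup_{j=1}^{v}\left\{ \vec{x}:x_{j}=0\right\}$ is a finite union of sets, each of which projects under the surjective coordinate homomorphism $\pi_{j}$ onto the finite set $\left\{ 0\right\}$, which is not piecewise syndetic in $\mathbb{Z}\left[i\right]$; since homomorphic images of piecewise syndetic sets are piecewise syndetic and the family of piecewise syndetic sets is partition regular, this complement is not piecewise syndetic, so $\left(\mathbb{Z}\left[i\right]\setminus\left\{ 0\right\} \right)^{v}$ is PS$^{*}$ and hence lies in every $p\in K$. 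Therefore $W=W'\cap\left(\mathbb{Z}\left[i\right]\setminus\left\{ 0\right\} \right)^{v}\in p$, and $W$ is PS$^{*}$ in $\mathbb{Z}\left[i\right]^{v}$.

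The hypothesis that $A$ carries $\left(\mathbb{Z}\left[i\right]\setminus\left\{ 0\right\} \right)^{v}$ into $\left(\mathbb{Z}\left[i\right]\setminus\left\{ 0\right\} \right)^{u}$ is what keeps the restriction to nonzero arguments compatible with the nonzero values demanded by $C$, so that no relevant solutions are discarded when passing from $W'$ to $W$. I expect the main obstacle to be precisely this last step --- verifying the nonzero-coordinate membership $\left(\mathbb{Z}\left[i\right]\setminus\left\{ 0\right\} \right)^{v}\in p$ for arbitrary $p\in K$ together with the clopen transfer $\tilde{\varphi}\left(p\right)\in\bar{C}^{u}\Rightarrow W'\in p$; the rest is a mechanical chain of inclusions from Lemma~\ref{Lemma 1.3}(h) and Lemma~\ref{Lemma 1.6}(b), with no need for the Ramsey/ordering machinery that the $\Delta^{*}$ case required.
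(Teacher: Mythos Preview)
Your approach is essentially the same as the paper's: use Lemma~\ref{Lemma 1.3}(h) to reduce to showing $W\in p$ for each $p\in K\left(\beta\left(\mathbb{Z}\left[i\right]^{v}\right)\right)$, apply Lemma~\ref{Lemma 1.6}(b) to get $\tilde{\varphi}\left(p\right)\in\left(K\left(\beta\mathbb{Z}\left[i\right]\right)\right)^{u}\subseteq\bar{C}^{u}$, and pull back to $W\in p$. You are simply more explicit than the paper about the clopen transfer and about the nonzero-coordinate issue, which the paper's terse proof leaves implicit.
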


\begin{proof}
Define $\varphi:\left(\mathbb{Z}\left[i\right]\setminus\left\{ 0\right\} \right)^{v}\rightarrow\left(\mathbb{Z}\left[i\right]\setminus\left\{ 0\right\} \right)^{v}$
by $\varphi\left(\vec{z}\right)=A\vec{z}$, and let $\tilde{\varphi}:\beta\left(\mathbb{Z}\left[i\right]^{v}\right)\rightarrow\left(\beta\mathbb{Z}\left[i\right]\right)^{u}$
be its continuous extension. Let $p\in K\left(\beta\left(\mathbb{Z}\left[i\right]^{v}\right)\right)$.
By Lemma \ref{Lemma 1.6}, $\tilde{\varphi}\left(p\right)\in\left(K\left(\beta\mathbb{Z}\left[i\right]\right)\right)^{u}$
and thus $\tilde{\varphi}\left(p\right)\in\bar{C}^{u}$ and thus $W\in p$
as required.
\end{proof}
\begin{thm}
Let A be a $u\times v$ image partition regular matrix with entries
from $\mathbb{Z}\left[i\right]$. If $C$ is thick in $\mathbb{Z}\left[i\right]$,
then $W=\left\{ \vec{z}\in\left(\mathbb{Z}\left[i\right]\setminus\left\{ 0\right\} \right)^{v}:A\vec{z}\in C^{u}\right\} $
is thick in $\mathbb{Z}\left[i\right]^{v}$. 
\end{thm}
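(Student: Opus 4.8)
The plan is to reduce everything to the algebraic characterization of thickness in Lemma~\ref{Lemma 1.3}(f): it suffices to exhibit a left ideal of $\beta(\mathbb{Z}[i]^{v})$ contained in $\bar{W}$. As in the preceding proofs I would set $\varphi(\vec{z})=A\vec{z}$ and let $\tilde{\varphi}\colon\beta(\mathbb{Z}[i]^{v})\to(\beta\mathbb{Z}[i])^{u}$ be its continuous extension, which is a homomorphism by Lemma~\ref{Lemma 1.6}. Since $C$ is thick, $\bar{C}$ contains a left ideal of $\beta\mathbb{Z}[i]$ and hence a minimal left ideal $L$; I would fix a minimal idempotent $e\in L$, so that $L=\beta\mathbb{Z}[i]+e\subseteq\bar{C}$.

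The \emph{crux}, and the step where image partition regularity is genuinely used, is to produce a single point $q_{0}\in\beta(\mathbb{Z}[i]^{v})$ whose image is the ``diagonal idempotent'' $\vec{e}=(e,\dots,e)$. For each $Q\in e$ the set $Q$ is central, so by \cite[Theorem 3.1(h)]{key-24} the set $E_{Q}=\{\vec{z}\in(\mathbb{Z}[i]\setminus\{0\})^{v}:A\vec{z}\in Q^{u}\}$ is nonempty. Because $E_{Q_{1}}\cap E_{Q_{2}}=E_{Q_{1}\cap Q_{2}}$ and $e$ is a filter, the family $\{E_{Q}:Q\in e\}$ has the finite intersection property, so I would take $q_{0}$ to be any ultrafilter containing it. A short check then shows that for each $i$ the $i$-th coordinate of $\tilde{\varphi}(q_{0})$ contains every $Q\in e$, whence it equals $e$; that is, $\tilde{\varphi}(q_{0})=\vec{e}$. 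Note that this is precisely the point at which one must bypass the absence of an order on $\mathbb{Z}[i]$: rather than solving $A\vec{s}=\vec{1}$ (which need not be possible for an arbitrary image partition regular $A$), one uses the idempotent $e$ to absorb the matrix, and this is the part I expect to be the main obstacle to write cleanly.

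With $q_{0}$ in hand, consider the principal left ideal $\beta(\mathbb{Z}[i]^{v})+q_{0}$. For $q=p+q_{0}$ the homomorphism property gives $\tilde{\varphi}(q)=\tilde{\varphi}(p)+\vec{e}$, whose $i$-th coordinate lies in $\beta\mathbb{Z}[i]+e=L\subseteq\bar{C}$; hence $\tilde{\varphi}(q)\in\bar{C}^{u}$ and therefore $\varphi^{-1}[C^{u}]\in q$. To finish and to deal with the nonzero-coordinate requirement, I would pass to a minimal left ideal $L'\subseteq\beta(\mathbb{Z}[i]^{v})+q_{0}$. For every $q\in L'$ we have $\varphi^{-1}[C^{u}]\in q$ as above, and since $q\in K(\beta(\mathbb{Z}[i]^{v}))$ the set $(\mathbb{Z}[i]\setminus\{0\})^{v}$ belongs to $q$ as well; this last fact is exactly the role of \cite[Lemma 2.3]{key-24}, namely that the sets of entrywise-large vectors lie in every idempotent and hence in every member of the smallest ideal (the same fact used implicitly in the PS{*} theorem above). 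Consequently $W\in q$ for all $q\in L'$, so $L'\subseteq\bar{W}$, and $W$ is thick in $\mathbb{Z}[i]^{v}$ by Lemma~\ref{Lemma 1.3}(f).
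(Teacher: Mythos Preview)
Your proof is correct and follows essentially the same route as the paper: produce a point of $\beta(\mathbb{Z}[i]^{v})$ that $\tilde\varphi$ sends to the diagonal $(e,\dots,e)$ for a minimal idempotent $e\in L\subseteq\bar C$, and then check that the left ideal it generates is mapped into $\bar C^{u}$ coordinatewise via $\beta\mathbb{Z}[i]+e=L$. The only packaging difference is that the paper invokes \cite[Lemma~2.3]{key-24} directly to obtain this preimage already as a \emph{minimal idempotent} $q$ (so that $\beta(\mathbb{Z}[i]^{v})+q$ is itself a minimal left ideal), whereas you rebuild that lemma inline with the finite-intersection-property argument and then descend to a minimal left ideal afterwards.

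One small caution on your last step: the inference ``lies in every idempotent, hence in every member of the smallest ideal'' is not valid in general (e.g.\ $2\mathbb{N}$ is in every idempotent of $\beta\mathbb{N}$ but not in $1+p$ for $p$ minimal). The conclusion $(\mathbb{Z}[i]\setminus\{0\})^{v}\in q$ for $q\in K\bigl(\beta(\mathbb{Z}[i]^{v})\bigr)$ is nevertheless correct; justify it instead by projecting: each $\tilde\pi_{j}(q)\in K(\beta\mathbb{Z}[i])\subseteq\beta\mathbb{Z}[i]\setminus\mathbb{Z}[i]$, so $\pi_{j}^{-1}[\mathbb{Z}[i]\setminus\{0\}]\in q$ for every $j$, and intersect.
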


\begin{proof}
Since $C$ is thick, pick a left ideal $L$ of $\beta\mathbb{Z}\left[i\right]$
such that $L\subseteq\bar{C}$. Pick by \cite[Corollary 2.6]{key-53}
a minimal idempotent $p\in L$. Define $\varphi:\mathbb{Z}\left[i\right]^{v}\rightarrow\mathbb{Z}\left[i\right]^{u}$
by $\varphi\left(\vec{z}\right)=A\vec{z}$, and let $\tilde{\varphi}:\beta\left(\mathbb{Z}\left[i\right]^{v}\right)\rightarrow\left(\beta\mathbb{Z}\left[i\right]\right)^{u}$
be its continuous extension. Let $\bar{p}=\left(p,p,\ldots p\right)^{T}$
and pick by \cite[Lemma 2.3]{key-24} a minimal idempotent $q\in\beta\left(\mathbb{Z}\left[i\right]^{v}\right)$
such that $\tilde{\varphi}\left(q\right)=p$. (By \cite[Theorem 3.1(h)]{key-24},
$p$ satisfies the hypotheses of \cite[Theorem 2.3]{key-24}.) We
claim that $\tilde{\varphi}\left[\beta\left(\mathbb{Z}\left[i\right]^{v}\right)+q\right]\subseteq\bar{C}^{u}$
so that $\beta\left(\mathbb{Z}\left[i\right]^{v}\right)+q\subseteq\bar{W}$
as required. To this end, let $r\in\beta\left(\mathbb{Z}\left[i\right]^{v}\right)$
and let $i\in\left\{ 1,2,\ldots,u\right\} $. Then $\pi_{i}\circ\tilde{\varphi}\left(r+q\right)=\pi_{i}\left(\tilde{\varphi}\left(r\right)\right)+p\in\beta\mathbb{Z}\left[i\right]+p\subseteq L\subseteq\bar{C}$.
\end{proof}

\end{document}